\newtheorem{theorem}{{Theorem}}[section]
\newtheorem{lemma}{{Lemma}}[section]
\newtheorem{remark}{{Remark}}[section]
\newtheorem{proposition}{{Proposition}}[section]
\def\R{\mathbb R}
\def\N{\mathbb N}
\def\Z{\mathbb Z}
\newtheorem*{namedtheorem}{\theoremname}
  \newcommand{\theoremname}{testing}
  \newenvironment{named}[1]{
     \renewcommand{\theoremname}{#1}
     \begin{namedtheorem}}
     {\end{namedtheorem}}
\begin{document}

\title[Morse index of radial solutions of the H\'enon equation]{Monotonicity of the Morse index of radial solutions of the H\'enon equation in dimension two}

\thanks{Wendel Leite da Silva was partially supported by CNPq and CAPES. Ederson Moreira dos Santos was partially supported by CNPq grant 307358/2015-1 and FAPESP grant 2015/17096-6.}
\author{Wendel Leite da Silva and Ederson Moreira dos Santos}
\address{
Instituto de Ci\^encias Matem\'aticas e de Computa\c{c}\~ao \\ Universidade de S\~ao Paulo,  CEP 13560-970 - S\~ao Carlos - SP - Brazil}
\email{
wendelleite@usp.br, ederson@icmc.usp.br}

\date{\today}
\subjclass[2010]{35B06; 35B07;  35J15; 35J61}
\keywords{Semilinear elliptic equations; Hénon equation; Nodal solutions; Morse index}

\begin{abstract}
We consider the equation
\[
-\Delta u = |x|^{\alpha} |u|^{p-1}u, \ \ x \in B, \ \ u=0 \quad \text{on} \ \ \partial B,
\]
where $B \subset {\mathbb R}^2$ is the unit ball centered at the origin, $\alpha \geq0$, $p>1$, and we prove some results on the Morse index of radial solutions.  The contribution of this paper is twofold. Firstly, fixed the number of nodal sets $n\geq1$ of the solution $u_{\alpha,n}$, we prove that the Morse index $m(u_{\alpha,n})$ is monotone non-decreasing with respect to $\alpha$. Secondly, we provide a lower bound for the Morse indices $m(u_{\alpha, n})$, which shows that $m(u_{\alpha, n}) \to +\infty$ as $\alpha \to + \infty$.
\end{abstract}
\maketitle

\section{Introduction}
The Hénon equation \cite{henon} was proposed as a model to study stellar distribution in a cluster of stars with the presence of a black hole located at the center of the cluster.  Besides its application to astrophysics, Hénon-type equations also model steady-state distributions in other diffusion processes; see the introduction in \cite{pacella-indiana} and the references therein for a more precise description on applications. Apart from its applications, the Hénon equation is an excellent prototype for the study of some important problems on the qualitative analysis of solutions of elliptic partial differential equations. For example, the symmetry of least energy solutions and least energy nodal solutions \cite{smets-su-willem, BWW, SW} and some concentration phenomena \cite{cao-peng-yan, BW, pacella-indiana}. In this paper we present some results on the Morse index of radially symmetric solutions.

Consider the equation
\begin{equation}\label{1}
-\Delta u = g(|x|,u) \textrm{ in }  \Omega, \quad u =0 \textrm{ on } \partial \Omega.
\end{equation}
where $\Omega \subset \R^N$, $N\geq 2$, is either a ball or an annulus centered at the origin, $g:[0,+\infty)\times \R \rightarrow \R$ is such that $r\mapsto g(r,u)$ is $C^{0,\beta}$ on bounded sets of $[0,+\infty)\times \R$, $u\mapsto g_u(r,u)$ is $C^{0,\gamma}$ on bounded sets of $[0,+\infty)\times \R$, where $g_u$ denotes the derivative of $g$ with respect to the variable $u$. 

Given any continuous function $u : \Omega \rightarrow \R$ we will denote by $n(u)$ the number of nodal sets of $u$, i.e. of connected components of $\{x \in\Omega; u(x)\neq 0\}$. The Morse index $m(u)$ of a solution $u$ of \eqref{1} is the maximal dimension of a subspace of $H^1_0(\Omega)$ in which the quadratic form
\begin{equation*}\label{Q}
H^1_0(\Omega) \ni w\mapsto Q_u(w):=\int_{\Omega}|\nabla w(x)|^2 dx - \int_{\Omega}g_u(|x|,u(x))w^2(x) dx
\end{equation*}
is negative definite. Since we are considering the case of bounded domains, $m(u)$ coincides with the number of negative eigenvalues, counted with their multiplicity, of the linearized operator $L_u := -\Delta - g_u(|x|, u)$ in the space $H^1_0(\Omega)$. When the solution $u$ is radial, we will denote by $m_{rad}(u)$ the radial Morse index of $u$, i.e. the maximal dimension of a subspace of $H^1_{0,rad}(\Omega)$ in which the quadratic form $Q_u$ is negative definite or, alternatively, $m_{rad}(u)$ is the number of negative eigenvalues, counted with their multiplicity, of $L_u$ in the space $H^1_{0,rad}(\Omega)$.

In case the nonlinear term $g$ does not depend on the space variable, Aftalion and Pacella \cite{aftalion} obtained some lower bounds on the Morse index of sign changing radial solutions of \eqref{1}, which recently were improved by De Marchis, Ianni and Pacella \cite[Theorem 2.1]{pacella}.

\begin{named}{Theorem A}[\textbf{Autonomous problems}]\label{ThA}
Let $u$ be a radial nodal solution of \eqref{1} with $g(|x|, u) = f(u)$, $f \in C^1$. Then 
$$
m_{rad}(u) \geq n(u)-1\quad \text{and}\quad
m(u) \geq m_{rad}(u) + N(n(u)-1).
$$
Moreover, if $f$ is superlinear, i.e. satisfies the condition
\begin{equation}\label{superlinear}
f'(s)>\frac{f(s)}{s} \quad \forall\, s\in \R \backslash \{0\},
\end{equation}
then
$$
m_{rad}(u)\geq n(u) \ \ \text{and hence} \ \
m(u) \geq n(u) + N(n(u)-1).
$$
\end{named}

In this paper we consider the non-autonomous equation
\begin{equation}\label{f}
\left\{
\begin{array}{l}
\begin{aligned}
-\Delta u &= |x|^\alpha f(u) &\textrm{in}&\ \ \Omega, \vspace{0.3 cm}\\
           u &= 0                     &\textrm{on}&\ \ \partial \Omega,
\end{aligned}
\end{array}
\right.
\end{equation}
where $\Omega \subset \R^2$ is either a ball or an annulus centered at the origin, $\alpha>0$ and $f:\R \rightarrow \R$ is $C^{1,\beta}$ on bounded sets of $\R$. We recall the following estimates obtained in \cite[Theorems 1.1 and 1.2]{ederson}, which were used to prove that least energy nodal solutions of \eqref{f} are not radially symmetric.

\begin{named}{Theorem B}\label{ThB}
Let $u$ be a radial sign changing solution of \eqref{f}. Then $u$ has Morse index greater than or equal to $3$. Moreover, if \eqref{superlinear} holds, then the Morse index of $u$ is at least $n(u) + 2$. In case $\alpha$ is even, then these lower bounds can be improved, namely they become $\alpha+ 3$ and $n(u)+ \alpha+2$, respectively.
\end{named}

Very recently these lower bounds were improved in \cite[Theorem 1.1]{amadori} by characterizing the Morse index in terms of a singular one dimensional eigenvalue problem. We also mention the paper \cite{weth} where its proved that the Morse index of radial solutions goes to infinity as $\alpha \to \infty$. Given any $\beta \in \R$, we set $[\beta]:=\max \{n\in \Z; n\leq \beta\}$.

\begin{named}{Theorem C}\label{ThC}
Let $\alpha \geq0$ and $u$ be a radial nodal solution of \eqref{f}. Then 
$$
m_{rad}(u) \geq n(u)-1\quad \text{and}\quad
m(u) \geq m_{rad}(u) + \left( n (u) -1\right) \left(2\left[\frac{\alpha}{2}\right] + 2\right).
$$
Moreover, if \eqref{superlinear} holds, then
$$
m_{rad}(u)\geq n(u) \ \ \text{and hence} \ \ 
m(u) \geq n(u) + \left( n (u) -1\right) \left(2\left[\frac{\alpha}{2}\right] + 2\right).
$$
\end{named}

We obtain an improvement for these lower bounds.

\begin{theorem}\label{newtheo}
Let $\alpha \geq0$ and $u$ be a radial nodal solution of \eqref{f}. Then 
$$
m_{rad}(u) \geq n(u)-1\quad \text{and}\quad
m(u) \geq m_{rad}(u) + \left( m(u_0) - m_{rad}(u_0)\right) \left(\left[\frac{\alpha}{2}\right] + 1\right),
$$
where $u_0$ is a radial solution with $n(u_0)=n(u)$ of the autonomous problem
\begin{equation}\label{u_0}
-\Delta u_0 = \left(\frac{2}{\alpha+2}\right)^2 f(u_0) \ \  \text{in} \ \ \Omega_{\frac{2}{\alpha+2}}:=\{|x|^{\frac{\alpha}{2}}x; x\in \Omega\},\quad u_0=0 \ \ \text{on} \ \ \partial \Omega_{\frac{2}{\alpha+2}}.
\end{equation}
Moreover, if \eqref{superlinear} holds, then
$$
m_{rad}(u)\geq n(u) \ \ \text{and hence} \ \ 
m(u) \geq n(u) + \left( m(u_0) - m_{rad}(u_0)\right) \left(\left[\frac{\alpha}{2}\right] + 1\right).
$$
\end{theorem}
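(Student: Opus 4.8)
The plan is to transform the non-autonomous problem \eqref{f} into the autonomous problem \eqref{u_0} through an Emden--Fowler change of the radial variable, and then to compare the two Morse indices mode by mode in the Fourier (spherical harmonics) decomposition, the comparison being governed by an elementary floor-function inequality. Concretely, I would set $\gamma := (\alpha+2)/2$ and introduce the new radial variable $t = r^{\gamma}$. A radial function $u(r)$ solves the radial form of \eqref{f} if and only if $u_0(t) := u(r)$ solves the radial form of \eqref{u_0} on $\Omega_{2/(\alpha+2)}$; since $r \mapsto r^{\gamma}$ is an increasing bijection of the radial coordinate, the nodal structure is preserved, so $n(u_0) = n(u)$. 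This produces exactly the companion solution $u_0$ in the statement.

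Next I would use the Fourier decomposition in dimension two: writing $w \in H^1_0(\Omega)$ in angular modes $\psi(r)\cos(k\theta)$ and $\psi(r)\sin(k\theta)$, the quadratic form $Q_u$ block-diagonalizes, and if $N_k$ denotes the number of negative eigenvalues of the singular one-dimensional operator $-\psi'' - \frac{1}{r}\psi' + \frac{k^2}{r^2}\psi - r^{\alpha}f'(u)\psi$, then $m(u) = N_0 + 2\sum_{k \geq 1} N_k$ and $m_{rad}(u) = N_0$. The key computation is that, after substituting $t = r^{\gamma}$ and $\phi(t) = \psi(r)$ into the mode-$k$ quadratic form and changing variables in the integrals, one obtains $Q_{u,k}(\psi) = \gamma\,\mathcal{Q}_{\nu}(\phi)$, where $\mathcal{Q}_{\nu}$ is the autonomous quadratic form associated with \eqref{u_0} in angular mode $\nu$ and $\nu = 2k/(\alpha+2)$. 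Hence $N_k = N(2k/(\alpha+2))$, where $N(\nu)$ is the negative-eigenvalue count of $\mathcal{Q}_{\nu}$, now regarded as a function of the real parameter $\nu \geq 0$. In particular $N_0 = N(0) = m_{rad}(u_0)$, so $m_{rad}(u) = m_{rad}(u_0)$; applying Theorem A to $u_0$ then yields the radial bounds $m_{rad}(u) \geq n(u)-1$, and $m_{rad}(u) \geq n(u)$ under \eqref{superlinear}.

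It remains to bound the off-radial part. Since $\mathcal{Q}_{\nu}$ is monotone increasing in $\nu^2$, the map $\nu \mapsto N(\nu)$ is non-increasing and integer valued, while the autonomous index satisfies $m(u_0) = N(0) + 2\sum_{j \geq 1} N(j)$. Expanding both $\sum_{k \geq 1} N(2k/(\alpha+2))$ and $\sum_{j \geq 1} N(j)$ by a layer-cake count through the thresholds $\nu_i := \sup\{\nu \geq 0 : N(\nu) \geq i\}$ turns each sum into a sum of integer parts $\lfloor \nu_i(\alpha+2)/2 \rfloor$ and $\lfloor \nu_i \rfloor$ respectively (up to harmless boundary conventions), and the elementary inequality $\lfloor m\,\alpha/2 \rfloor \geq m\lfloor \alpha/2 \rfloor$, valid for every $m \in \N$, gives termwise $\lfloor \nu_i(\alpha+2)/2 \rfloor \geq ([\alpha/2]+1)\lfloor \nu_i \rfloor$. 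Summing over $i$ yields $\sum_{k \geq 1} N(2k/(\alpha+2)) \geq ([\alpha/2]+1)\sum_{j \geq 1} N(j)$, and multiplying by $2$ gives $m(u) - m_{rad}(u) \geq (m(u_0) - m_{rad}(u_0))([\alpha/2]+1)$, the asserted bound.

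I expect the main obstacle to be the rigorous justification that the radial change of variables induces a bijection between the relevant weighted Sobolev spaces that preserves, mode by mode, the \emph{exact} number of negative eigenvalues — in particular controlling the singular weight $t^{-1}$ arising from the angular term near the origin in the ball case, and verifying that $N(\nu)$ is genuinely finite, non-increasing and well defined for non-integer $\nu$. The combinatorial passage from the monotonicity of $N$ to the floor inequality is the other point requiring care, though it is elementary once the threshold description of the two sums is in place.
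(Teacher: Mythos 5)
Your proposal is correct and rests on the same two pillars as the paper's proof --- the Emden--Fowler change of radial variable $t=r^{(\alpha+2)/2}$ producing the autonomous companion $u_0$, and the separation into angular modes combined with the elementary floor inequality $[\,m\alpha/2\,]\geq m[\alpha/2]$ --- but you organize the counting differently. The paper first proves the bound for $\alpha$ even (Proposition \ref{prop2}), where $\kappa^{-1}=m=(\alpha+2)/2$ is an integer: it passes through the singular eigenvalue problem \eqref{eigenfunction}, uses the decomposition $\lambda=\lambda_n^{rad}+k^2$ of Proposition \ref{decomposition} together with \cite[Lemma 2.6]{gladiali}, and for each negative radial eigenvalue counts the admissible $k$ via the injection $k\mapsto mk$; it then handles general $\alpha$ by a second comparison step, lowering the exponent to $\gamma=2[\alpha/2]$ through the quadratic-form monotonicity of Lemma \ref{lemma 1}. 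You instead work in one pass for arbitrary $\alpha$: you keep the per-mode negative counts $N_k$, observe that the change of variables sends mode $k$ to the autonomous form with real angular parameter $\nu=2k/(\alpha+2)$, and compare $\sum_k N(2k/(\alpha+2))$ with $\sum_j N(j)$ by a layer-cake swap --- this is the transpose of the paper's double count. What your route buys is the elimination of the even/odd case distinction and of the singular-eigenvalue machinery (you never need \cite[Lemma 2.6]{gladiali}, only the block-diagonalization of $Q_u$ over Fourier modes); what it costs is exactly the technical debt you flag yourself: the rigorous definition, finiteness and monotonicity of $N(\nu)$ for non-integer $\nu$ in the weighted space with the $\nu^2/t^2$ term, and the boundary conventions in the threshold count (both check out: in the open-interval case $m_i<\nu_i$ strictly, so $\gamma\nu_i>([\alpha/2]+1)m_i$ and the count of integers below $\gamma\nu_i$ is still large enough). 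The radial statements also come out the same way in both arguments, via $m_{rad}(u)=N(0)=m_{rad}(u_0)$ and Theorem A applied to $u_0$.
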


\begin{remark}
Observe that
\[
\frac{m(u_0)- m_{rad}(u_0)}{2} \geq n(u)-1.
\]
Indeed, the above inequality is equivalent to
\[
m(u_0) \geq m_{rad}(u_0) + 2 (n(u)-1) = m_{rad}(u_0) + 2 (n(u_0)-1), 
\]
and this is guaranteed by Theorem A.  Also observe that this inequality can be strict in case $f(s) = |s|^{p-1}s$, $p \gg1$, $n(u)=2$, since $\frac{m(u_0)- m_{rad}(u_0)}{2} =5$ by \cite[Theorem 1.1]{pacella12}.
\end{remark}

Next, we consider the particular case of the Hénon equation 
\begin{equation}\label{alpha}
\left\{
\begin{array}{l}
\begin{aligned}
-\Delta u &= |x|^{\alpha}|u|^{p-1}u &\textrm{in}&\ \ B, \vspace{0.3 cm}\\
           u &= 0                     &\textrm{on}&\ \ \partial B,
\end{aligned}
\end{array}
\right.\tag{$P_{\alpha}$}
\end{equation}
where $B \subset \R^2$ is the unit open ball centered at the origin, $\alpha \geq 0$ is a parameter and $p > 1$. Fixed the number of nodal sets $n$, we prove that the Morse index of a radial nodal solution of \eqref{alpha} with $n$ nodal sets is monotone non-decreasing with respect to $\alpha$.

\begin{theorem}[\textbf{Monotonicity of the Morse indices}]\label{theo1}
Let $u_\alpha$ and $u_\beta$ be radial solutions of \eqref{alpha} and $(P_\beta)$, respectively, with the same number $n\geq1$ of nodal sets. If $0\leq\alpha\leq\beta$, then $m(u_\alpha)\leq m(u_\beta)$ and $m_{rad}(u_\alpha)=m_{rad}(u_\beta)=n$.
\end{theorem}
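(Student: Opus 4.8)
The plan is to diagonalize the linearized operator $L_{u_\alpha}=-\Delta-p\,|x|^\alpha|u_\alpha|^{p-1}$ (note $f(s)=|s|^{p-1}s$ gives $f'(s)=p|s|^{p-1}$) over the Fourier modes on $S^1$, and then to absorb the whole dependence on $\alpha$ into a single monotone counting function for the associated autonomous problem. First I would expand $w=\sum_{k\ge0}\phi_k(r)Y_k(\theta)$ in the orthonormal basis $Y_0\equiv\text{const}$, $Y_k\in\{\cos k\theta,\sin k\theta\}$ ($k\ge1$) of $L^2(S^1)$. The quadratic form $Q_{u_\alpha}$ then splits as an orthogonal sum over $k$, so that $m(u_\alpha)=m_{rad}(u_\alpha)+2\sum_{k\ge1}m_k(u_\alpha)$, where $m_k(u_\alpha)$ is the number of negative eigenvalues of the one dimensional operator
\[
L^{(k)}_{u_\alpha}=-\frac{d^2}{dr^2}-\frac1r\frac{d}{dr}+\frac{k^2}{r^2}-p\,r^\alpha|u_\alpha|^{p-1}
\]
on $(0,1)$ with weight $r\,dr$, Dirichlet datum at $r=1$ and (for $k\ge1$) $\phi_k(0)=0$; here $m_0=m_{rad}$.

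Next I would run the Emden--Fowler change of variables $t=r^{\sigma}$, $\sigma=\frac{\alpha+2}{2}$, which already takes \eqref{f} into \eqref{u_0}: it sends $u_\alpha$ to a radial solution $u_0$ of $-\Delta u_0=\sigma^{-2}|u_0|^{p-1}u_0$ on $B$ with $n(u_0)=n$. Tracking the three pieces $r\,dr$, $(\phi')^2$ and $k^2r^{-2}$ under this substitution, with $\tilde\phi(t):=\phi(r)$, yields
\[
\tfrac1\sigma\,Q^{(k)}_{u_\alpha}(\phi)=\int_0^1\left[(\tilde\phi')^2+\frac{\mu^2}{t^2}\tilde\phi^2-\frac{p}{\sigma^2}|u_0|^{p-1}\tilde\phi^2\right]t\,dt,\qquad \mu=\frac{2k}{\alpha+2},
\]
whose right hand side is exactly the $\mu$-mode quadratic form of the autonomous problem \eqref{u_0}. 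Since $\sigma^{-1}>0$ and $\phi\mapsto\tilde\phi$ is a bijection of the relevant spaces, $m_k(u_\alpha)$ equals the number of negative eigenvalues of this autonomous singular Sturm--Liouville form at the real parameter $\mu=2k/(\alpha+2)$. Moreover the power scaling $u_0\mapsto\lambda u_0$ leaves the potential $\sigma^{-2}p|u_0|^{p-1}$ invariant, and radial solutions of the autonomous problem with $n$ nodal sets are unique up to sign, so this count is a single function $\hat m(\mu)$ that does not depend on $\alpha$.

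I would then observe that $\hat m$ is non-increasing on $[0,\infty)$, since raising $\mu$ only adds the nonnegative term $\mu^2t^{-2}\tilde\phi^2$, and that $\hat m(\mu)=0$ once $\mu$ is large (the Hardy term dominates), so every sum below is finite. Combining the two steps,
\[
m(u_\alpha)=\hat m(0)+2\sum_{k\ge1}\hat m\!\left(\frac{2k}{\alpha+2}\right).
\]
For $0\le\alpha\le\beta$ one has $\frac{2k}{\beta+2}\le\frac{2k}{\alpha+2}$ for every $k$, hence $\hat m(2k/(\alpha+2))\le\hat m(2k/(\beta+2))$ term by term, giving $m(u_\alpha)\le m(u_\beta)$. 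The radial index is the $k=0$ contribution: $m_{rad}(u_\alpha)=\hat m(0)=m_{rad}(u_0)$ is independent of $\alpha$, so $m_{rad}(u_\alpha)=m_{rad}(u_\beta)$; since $p>1$ makes $f$ superlinear, Theorem~C gives $m_{rad}(u_\alpha)\ge n$, and the matching upper bound $\hat m(0)=m_{rad}(u_0)\le n$ then forces $m_{rad}(u_\alpha)=m_{rad}(u_\beta)=n$.

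The technical heart lies in the middle two steps. I expect the main obstacle to be controlling the autonomous count $\hat m(\mu)$ as a function of the continuous ``non-integer angular momentum'' parameter $\mu$: one must make the change-of-variables identity rigorous on the singular weighted spaces and show that $\hat m$ is finite, well defined, and non-increasing as $\mu$ sweeps $[0,\infty)$. The remaining delicate point is the sharp upper bound $m_{rad}(u_0)\le n$. For this I would use the scaling generator $\psi=\frac{2}{p-1}u_0+t\,u_0'$, which solves $L^{(0)}_{u_0}\psi=0$ with $\psi'(0)=0$ and is therefore proportional to the regular solution at the origin, and then count its sign changes in $(0,1)$ — the crux being that there is exactly one inside each nodal annulus — to conclude that $L^{(0)}_{u_0}$ has precisely $n$ negative eigenvalues.
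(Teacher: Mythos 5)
Your argument is correct in substance but takes a genuinely different route from the paper. The paper never decomposes into Fourier modes for this theorem: it uses the planar map $T_\kappa(y)=|y|^{\kappa-1}y$ with $\kappa=\frac{\beta+2}{\alpha+2}\ge 1$ to send $u_\alpha$ directly to $u_\beta$ (Lemma \ref{characterization}) and then proves the single quadratic-form comparison $Q_\beta(w_\kappa)\le \kappa\,Q_\alpha(w)$ for all $w\in H^1_0(B)$, with equality on $H^1_{0,rad}(B)$ (Proposition \ref{prop1}, resting on the gradient inequality of Lemma \ref{lemma 1}); negative subspaces then transport wholesale and the theorem follows in three lines, with $m_{rad}=n$ quoted from Harrabi--Rebhi--Selmi. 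Your proof factors this comparison through the autonomous problem: you expand in modes, apply the Emden--Fowler substitution $t=r^{(\alpha+2)/2}$ (which is exactly the radial part of $T_{2/(\alpha+2)}$) to each mode, and reduce everything to the monotone counting function $\hat m(\mu)$ of a fixed autonomous singular Sturm--Liouville problem evaluated at $\mu=2k/(\alpha+2)$. This buys strictly more: the identity $m(u_\alpha)=\hat m(0)+2\sum_{k\ge1}\hat m(2k/(\alpha+2))$ is essentially the Amadori--Gladiali characterization and would also yield the quantitative lower bounds of Theorem \ref{newtheo} and the divergence $m(u_{\alpha})\to\infty$, whereas the paper's argument is shorter and avoids all spectral technicalities of the singular weight $\mu^2/t^2$. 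Two points in your write-up still need to be nailed down: (i) the mode-by-mode change-of-variables identity on the weighted spaces and the equality of negative-subspace dimensions under $\phi\mapsto\tilde\phi$ (this is the content of the paper's Lemma \ref{lemma 1} and is routine, but you correctly flag it); and (ii) the upper bound $m_{rad}(u_0)\le n$, for which your scaling-generator/Sturm-oscillation sketch is the right idea but is not carried out --- in particular the claim that $\frac{2}{p-1}u_0+t\,u_0'$ has exactly one zero per nodal region, and the degenerate case where it vanishes at $t=1$, require proof; the paper sidesteps this entirely by citing \cite[Proposition 2.9]{Harrabi}, and you could do the same.
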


The case of $N=2$ is special. We may use the change of variables \eqref{Tk} with $\kappa = \frac{\beta+2}{\alpha+2}$ to establish a correspondence between the radial solutions of \eqref{alpha} with the radial solutions of $(P_{\beta})$ with the same number of nodal sets. Although such transformation is not available for dimensions higher than two, we conjecture that Theorem \ref{theo1} should also hold for $N\geq 3$. 

\section{An auxiliary eigenvalue problem}

In this section we recall an important decomposition for some singular eigenvalue problems. Let $N\geq 2$ and consider the sphere $S^{N-1} \subset \mathbb{R}^N$. We recall that the spherical harmonics on $S^{N-1}$ are the eigenfunctions of the Laplace-Beltrami operator $-\Delta_{S^{N-1}}$. Indeed, the operator $-\Delta_{S^{N-1}}$ admits a sequence of eigenvalues $0=\lambda_0<\lambda_1\leq\ldots$ and corresponding eigenfunctions $(Y_k)$ which form a complete orthonormal system for $L^2(S^{N-1})$. More precisely, each $Y_k$ satisfies
\[
-\Delta_{S^{N-1}}Y_k(\theta)=\lambda_k Y_k(\theta),\quad \text{for}\ \theta \in S^{N-1},
\]
and each eigenvalue $\lambda_k$ is given by the formula
\begin{equation}\label{eq:SH}
\lambda_k = k(k+N-2),\quad k=0,1,\ldots
\end{equation}
whose multiplicity is
\[
N_0=1\quad \text{and}\quad N_k=\frac{(2k+N-2)(k+N-3)!}{(N-2)!k!}\ \ \text{for}\ \ k\geq1.
\]

Let $\Omega \subset \mathbb{R}^N$ be either a ball or an annulus centered at the origin and consider the problem
\begin{equation}\label{eigenfunction}
-\Delta \psi + a(x)\psi = \lambda \frac{\psi}{|x|^2}\ \textrm{ in }\ \Omega\backslash\{0\}, \quad \psi = 0 \textrm{ on }  \partial \Omega,
\end{equation}
where $a(x)$ is a radial function in $L^{\infty}(\Omega)$. Set
\[
\mathcal{H}_0 := \left\{ u \in H^1_0(\Omega);  \int_{\Omega} \frac{u^2}{|x|^2} < \infty\right\}.
\]
Then, endowed with inner product
\[
(u,v)_{\mathcal{H}_0 }:= \int_\Omega \nabla u \nabla v + \frac{uv}{|x|^2} dx, \quad u, v \in \mathcal{H}_0,
\]
$\mathcal{H}_0$ is a Hilbert space. We say that $\psi \in \mathcal{H}_0\backslash\{0\}$ is an eigenfunction of \eqref{eigenfunction}, if
\[
\int_{\Omega} \nabla \psi \nabla \varphi + a(x) \psi \varphi dx = \lambda  \int_{\Omega} \frac{\psi \varphi}{|x|^2} dx \ \ \forall\, \varphi \in \mathcal{H}_0.
\]

We recall the following result on the decomposition of eigenvalues of \eqref{eigenfunction}; see \cite[Proposition 4.1]{amadori} or \cite[Lemma 3.1]{bartsch}.

\begin{proposition}\label{decomposition}
Let $\lambda < \left(\frac{N-2}{2}\right)^2$ be an eigenvalue of \eqref{eigenfunction}.
Then, there exists $k\geq0$ such that 
\begin{equation}\label{lambda}
\lambda=\lambda_{rad}+\lambda_k\, ,
\end{equation}
where $\lambda_{rad}$ is a radial eigenvalue of \eqref{eigenfunction} and $\lambda_k$ as in \eqref{eq:SH}. Conversely, if \eqref{lambda} holds and $\psi_{rad}$ is an eigenfunction associated to $\lambda_{rad}$, then $\psi=\psi_{rad}(r)Y_k(\theta)$ is an eigenfunction of \eqref{eigenfunction}
associated to $\lambda$. 
\end{proposition}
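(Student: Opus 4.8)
The plan is to combine the two-dimensional change of variables \eqref{Tk} with the spherical harmonic splitting of Proposition \ref{decomposition}, reducing everything to a one-parameter family of radial forms. First I would record how \eqref{Tk} acts on radial functions written in terms of $r=|x|\in(0,1)$: with $\kappa=\frac{\beta+2}{\alpha+2}\ge 1$ it sends a radial solution $u_\alpha$ of \eqref{alpha} to $s\mapsto \kappa^{2/(p-1)}u_\alpha(s^\kappa)$, and a direct substitution shows this solves $(P_\beta)$ on the same unit ball. Because $s\mapsto s^\kappa$ is an increasing bijection of $(0,1)$, it preserves the interior zeros, hence the number of nodal sets, which is exactly what matches $u_\alpha$ with $u_\beta$. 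Substituting $\tilde\psi(s)=\psi(s^\kappa)$ into the mode-$k$ radial quadratic forms then gives, after changing variables, that the mode-$k$ form of $u_\beta$ equals $\kappa$ times the form of $u_\alpha$ in which the angular coefficient $k^2$ is replaced by $(k/\kappa)^2$. Concretely, for $\nu\ge 0$ put
\[
q_\nu(\psi)=\int_0^1\left(\psi'^2+\frac{\nu}{r^2}\psi^2-p\,r^\alpha|u_\alpha|^{p-1}\psi^2\right)r\,dr,
\]
let $\tilde m(\nu)$ be its number of negative directions; then the mode-$k$ Morse index of $u_\alpha$ is $\tilde m(k^2)$, while that of $u_\beta$ is $\tilde m\big((k/\kappa)^2\big)$ (the positive factor $\kappa$ and the bijection $\psi\leftrightarrow\tilde\psi$ do not change sign counts).

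For the radial statement I would first note that the case $k=0$ gives $\nu=0$ on both sides, so $m_{rad}(u_\alpha)=\tilde m(0)=m_{rad}(u_\beta)$ for all admissible $\alpha,\beta$. To identify this common value with $n$, the lower bound $m_{rad}\ge n$ is immediate from Theorem C, since $f(s)=|s|^{p-1}s$ is superlinear. For the reverse inequality I would reduce through \eqref{Tk} with target $\beta=0$ to the autonomous problem \eqref{u_0} and exploit its dilation invariance: the generator $w=r\,u_0'+\frac{2}{p-1}u_0$ of the scaling symmetry lies in the kernel of the radial linearized operator and is its regular solution at the origin. By Sturm oscillation, the number of negative radial eigenvalues equals the number of zeros of $w$ in $(0,1)$, and a zero-counting argument, using that $w$ vanishes neither at the zeros nor at the critical points of $u_0$ and changes sign exactly once on each nodal interval, shows this number is precisely $n$. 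Hence $m_{rad}(u_\alpha)=m_{rad}(u_\beta)=n$.

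For the monotonicity of the full index I would invoke Proposition \ref{decomposition} to diagonalize $L_{u}$ over the spherical harmonics, so that the eigenfunctions split as $\psi_{rad}(r)Y_k(\theta)$ and, using $N_0=1$ and $N_k=2$ in dimension two,
\[
m(u_\alpha)=m_0(u_\alpha)+2\sum_{k\ge 1}m_k(u_\alpha),
\]
and likewise for $u_\beta$. The map $\nu\mapsto\tilde m(\nu)$ is non-increasing, because raising $\nu$ only adds the non-negative term $\frac{\nu}{r^2}\psi^2$ to $q_\nu$, which can destroy but never create negative directions. Since $\kappa\ge 1$ forces $(k/\kappa)^2\le k^2$, this yields $m_k(u_\beta)=\tilde m\big((k/\kappa)^2\big)\ge\tilde m(k^2)=m_k(u_\alpha)$ for every $k\ge 1$, while $m_0(u_\beta)=m_0(u_\alpha)=n$ by the previous step. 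Summing the two displays gives $m(u_\beta)\ge m(u_\alpha)$, completing the proof.

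The hard part will be the upper bound $m_{rad}\le n$: the transformation invariance of the forms and the monotonicity of $\tilde m(\nu)$ in $\nu$ are soft, but pinning the exact value $n$ demands the delicate zero count of the scaling generator $w$, equivalently a sharp control of the oscillation of the radial linearized problem against that of $u_0$. A secondary technical point is justifying rigorously, through the space $\mathcal H_0$ and Proposition \ref{decomposition}, that the singular modes $k\ge 1$ carrying the term $k^2/r^2$ genuinely diagonalize the quadratic form near the origin, so that the displayed decomposition of $m(u)$ is exact.
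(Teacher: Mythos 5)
There is a fundamental mismatch here: your proposal does not prove the statement in question. The statement is Proposition \ref{decomposition}, the spectral decomposition for the singular eigenvalue problem \eqref{eigenfunction}: every eigenvalue $\lambda<\left(\frac{N-2}{2}\right)^2$ splits as $\lambda=\lambda_{rad}+\lambda_k$ with $\lambda_{rad}$ a radial eigenvalue and $\lambda_k=k(k+N-2)$ a Laplace--Beltrami eigenvalue, and conversely every product $\psi_{rad}(r)Y_k(\theta)$ is an eigenfunction. What you have written instead is an argument for Theorem \ref{theo1} (monotonicity of the Morse indices $m(u_\alpha)\leq m(u_\beta)$ and the identity $m_{rad}(u_\alpha)=m_{rad}(u_\beta)=n$ for the H\'enon equation): the change of variables \eqref{Tk}, the comparison of the mode-$k$ quadratic forms $q_\nu$, and the zero count of the scaling generator all pertain to that theorem. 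Worse, your argument explicitly \emph{invokes} Proposition \ref{decomposition} as a black box (``I would invoke Proposition \ref{decomposition} to diagonalize $L_u$ over the spherical harmonics''), so read as a proof of the proposition it is circular, and read on its own terms it simply addresses a different result.

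A proof of the actual statement would have to run along these lines: expand an eigenfunction $\psi\in\mathcal{H}_0$ of \eqref{eigenfunction} in spherical harmonics, $\psi(x)=\sum_k\psi_k(r)Y_k(\theta)$ with $\psi_k(r)=\int_{S^{N-1}}\psi(r\theta)Y_k(\theta)\,d\theta$; project the weak formulation onto each mode to show that any nonzero component $\psi_k$ is a radial eigenfunction of \eqref{eigenfunction} with eigenvalue $\lambda-\lambda_k$ (the angular term $-\Delta_{S^{N-1}}$ contributing exactly $\lambda_k/|x|^2$); use the restriction $\lambda<\left(\frac{N-2}{2}\right)^2$, which is the Hardy-inequality threshold, to ensure the relevant quadratic forms and radial spectra below that level are well defined; and verify the converse by the direct computation $-\Delta\bigl(\psi_{rad}Y_k\bigr)=\bigl(-\psi_{rad}''-\tfrac{N-1}{r}\psi_{rad}'\bigr)Y_k+\tfrac{\lambda_k}{r^2}\psi_{rad}Y_k$. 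None of this appears in your proposal. For what it is worth, the paper itself does not prove this proposition either; it cites \cite[Proposition 4.1]{amadori} and \cite[Lemma 3.1]{bartsch}, so the expected content was precisely the separation-of-variables argument sketched above, not an application of it.
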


\section{Proofs of the main results}

Given $\kappa>0$, set
\begin{equation}\label{Tk}
T_\kappa:\R^2\rightarrow \R^2, \quad T_\kappa(0)= 0 \ \ \text{and} \ \ 
T_\kappa(y):=|y|^{\kappa-1}y \ \ \text{for} \ \ y \neq0.
\end{equation}
\noindent We perform the change of variable $x\leftrightarrow y$ putting $x=T_\kappa(y)$ and we observe that, see \cite[Lemma 2.1]{ederson}, $T_{\kappa}$ has the following properties:
\begin{itemize}
\item[(i)] $T_\kappa$ is a diffeomorphism between $\R^2\backslash \{0\}$ and $\R^2\backslash \{0\}$ whose inverse is
\begin{equation}\label{Tk-1}
T_\kappa^{-1}(x)=|x|^{\frac{1}{\kappa}-1}x,\ \ i.e.\ \ T_\kappa^{-1}=T_{\frac{1}{\kappa}}.
\end{equation}
\item[(ii)] In cartesian coordinates,
\begin{equation}\label{JTk}
|\det J_{T_\kappa}(y)|=\kappa|y|^{2\kappa-2},\ \ \forall \, y\neq 0.
\end{equation}

\end{itemize}

Let $\Omega \subset \R^2$ be either a annulus or a ball centered at the origin and set $\Omega_\kappa:= T_\kappa^{-1}(\Omega)$.
\begin{lemma}[Lemma 2.4 in \cite{ederson}]\label{lemma 1}
The map
$$
S_\kappa:H^{1}_{0}(\Omega_\kappa)\rightarrow H^1_0(\Omega),\ \ S_\kappa \psi:=\psi\circ T_\kappa^{-1},
$$
is a continuous linear isomorphism. Moreover, with $\varphi=\psi\circ T_\kappa^{-1}$,
$$
\min\left\{\kappa,\frac{1}{\kappa}\right\}\int_{\Omega}|\nabla\varphi(x)|^2dx \leq \int_{\Omega_\kappa}|\nabla\psi(y)|^2dy \leq \max\left\{\kappa,\frac{1}{\kappa}\right\}\int_{\Omega}|\nabla\varphi(x)|^2dx, \ \ \forall \, \psi \in H^1_0(\Omega_\kappa),
$$
\[
\kappa \int_{\Omega} |\nabla \varphi (x)|^2 dx = \int_{\Omega_{\kappa}} |\nabla \psi(y)|^2 dy,  \ \ \forall \ \psi \in H^1_{0,{\rm{rad}}}(\Omega_{\kappa}).
\]

\end{lemma}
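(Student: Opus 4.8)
The plan is to reduce everything to the observation that, in polar coordinates $(r,\theta)$, the diffeomorphism $T_\kappa$ acts only on the radial variable by $r\mapsto r^\kappa$ and leaves the angle fixed. This decouples the radial and angular parts of the Dirichlet integral and is exactly what produces the constants $\kappa$ and $1/\kappa$.

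First I would prove the gradient identity for smooth test functions. Take $\psi\in C_c^\infty(\Omega_\kappa\setminus\{0\})$, set $\varphi=\psi\circ T_\kappa^{-1}=S_\kappa\psi$, and let $\Phi(s,\theta)$, $\Psi(r,\theta)$ be the polar representatives of $\varphi,\psi$. The relation $\varphi=\psi\circ T_\kappa^{-1}$ becomes $\Psi(r,\theta)=\Phi(r^\kappa,\theta)$, so the chain rule gives $\partial_r\Psi=\kappa r^{\kappa-1}\partial_s\Phi$ and $\partial_\theta\Psi=\partial_\theta\Phi$, whence $|\nabla\psi|^2=\kappa^2 r^{2\kappa-2}(\partial_s\Phi)^2+r^{-2}(\partial_\theta\Phi)^2$. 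Substituting $s=r^\kappa$ in $\int_{\Omega_\kappa}|\nabla\psi|^2\,dy$ (with $dy=r\,dr\,d\theta$ and $dx=s\,ds\,d\theta$) and treating the two terms separately yields the clean identity
\[
\int_{\Omega_\kappa}|\nabla\psi|^2\,dy=\kappa\int_\Omega(\partial_s\varphi)^2\,dx+\frac{1}{\kappa}\int_\Omega\frac{1}{s^2}(\partial_\theta\varphi)^2\,dx=:\kappa A+\frac{1}{\kappa}B,
\]
where $A,B\geq0$ and $A+B=\int_\Omega|\nabla\varphi|^2\,dx$. Since $\min\{\kappa,1/\kappa\}\leq\kappa,\,1/\kappa\leq\max\{\kappa,1/\kappa\}$, the two-sided estimate is immediate by bounding $\kappa A+\kappa^{-1}B$ between $\min\{\kappa,1/\kappa\}(A+B)$ and $\max\{\kappa,1/\kappa\}(A+B)$. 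For radial $\psi$ the angular derivative vanishes, so $B=0$ and the identity collapses to $\int_{\Omega_\kappa}|\nabla\psi|^2=\kappa\int_\Omega|\nabla\varphi|^2$, which is the asserted radial equality.

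Next I would upgrade from smooth functions to all of $H^1_0$. The map $S_\kappa$ is linear, and its inverse is manifestly $S_{1/\kappa}$ (composition with $T_\kappa$, since $T_{1/\kappa}^{-1}=T_\kappa$). For $\psi\in C_c^\infty(\Omega_\kappa\setminus\{0\})$ the image $\varphi=S_\kappa\psi$ lies in $C_c^\infty(\Omega\setminus\{0\})$, because $T_\kappa$ restricts to a diffeomorphism of $\Omega_\kappa\setminus\{0\}$ onto $\Omega\setminus\{0\}$. The inequality already shows that $\|\nabla\varphi\|_{L^2(\Omega)}$ and $\|\nabla\psi\|_{L^2(\Omega_\kappa)}$ are comparable, and Poincaré's inequality then makes $S_\kappa$ bounded with bounded inverse on these dense subspaces. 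Passing to the limit along approximating sequences, the identity, both estimates and the radial equality persist for arbitrary $\psi\in H^1_0(\Omega_\kappa)$, and $S_\kappa$ extends to a continuous linear isomorphism.

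The delicate point is the density step that legitimizes working with functions supported away from the origin: I must know that $C_c^\infty(\Omega\setminus\{0\})$ is dense in $H^1_0(\Omega)$, and likewise on $\Omega_\kappa$. This is precisely where the dimension $N=2$ enters, through the fact that a single point has zero $H^1$-capacity in $\R^2$, so that removing the origin does not change the space $H^1_0$. Once this is secured, every approximation argument goes through and the possible singularity of $T_\kappa$ at the origin becomes invisible; this is the main obstacle, the remainder being the routine change-of-variable bookkeeping sketched above.
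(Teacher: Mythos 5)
Your proof is correct, but note the situation: the paper itself offers no proof of this lemma at all --- it is imported verbatim as Lemma 2.4 of the cited reference \cite{ederson}, so there is no in-paper argument to compare against. What you have written is a complete and correct reconstruction, and it is the natural one: the polar-coordinate computation giving
\[
\int_{\Omega_\kappa}|\nabla\psi|^2\,dy=\kappa\int_\Omega|\partial_s\varphi|^2\,dx+\frac{1}{\kappa}\int_\Omega\frac{|\partial_\theta\varphi|^2}{s^2}\,dx
\]
is exactly the mechanism that produces the constants $\min\{\kappa,1/\kappa\}$, $\max\{\kappa,1/\kappa\}$ and the clean factor $\kappa$ in the radial case (where the angular term vanishes), and your substitution bookkeeping checks out. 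The capacity remark is also the right way to justify working with $C_c^\infty(\Omega\setminus\{0\})$: a point has zero $H^1$-capacity in $\R^2$, so $H^1_0(\Omega\setminus\{0\})=H^1_0(\Omega)$ and the singularity of $T_\kappa$ at the origin is harmless. One small step you should make explicit: when you say the radial equality ``persists'' under the density limit, either your approximating sequences must be chosen radial (e.g.\ by radially symmetric mollification), or --- more simply --- observe that both quadratic forms $A(\varphi)=\int_\Omega|\tfrac{x}{|x|}\cdot\nabla\varphi|^2dx$ and $B(\varphi)=\int_\Omega|\nabla\varphi|^2dx-A(\varphi)$ are continuous on $H^1_0(\Omega)$ (multiplication by the bounded field $x/|x|$ is continuous on $L^2$), so the identity $\int_{\Omega_\kappa}|\nabla\psi|^2dy=\kappa A(\varphi)+\kappa^{-1}B(\varphi)$ extends to all of $H^1_0(\Omega_\kappa)$, and for radial $\varphi$ one has $B(\varphi)=0$. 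Likewise, identifying the continuous extension of $S_\kappa$ with the pointwise composition $\psi\circ T_\kappa^{-1}$ requires a one-line a.e.-convergence argument along a subsequence; both points are routine, but they are the only places where your sketch leaves something unsaid.
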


Given a radial function $u : \Omega \to \R$, set  $v : \Omega_\kappa \to \R$ by $v(y) = u(T_\kappa y)$. Then $v$ is radially symmetric and, by \cite[eq. (2.13)]{ederson},
\begin{equation}\label{radiallaplacian}
\Delta v(y) = \kappa^2 |x|^{\frac{2\kappa - 2}{\kappa}}\Delta u(x).
\end{equation}
Thus, if $u$ is a radial solution of \eqref{f}, then $v : \Omega_\kappa \to \R$  satisfies
$$
-\Delta v(y) = \kappa^2 |y|^{2\kappa-2+\kappa \alpha} f(v(y)), \quad y \in \Omega_\kappa, \quad v = 0 \quad \rm{on}\ \partial \Omega_\kappa.
$$
Now, given any $\beta \geq 0$, we choose $\kappa$ so that
\begin{equation}\label{kab}
2\kappa-2+\kappa \alpha = \beta, \quad \rm{i.e.}\quad \kappa := \frac{\beta+2}{\alpha+2}\,.
\end{equation}
and so
\begin{equation}\label{beta}
-\Delta v(y) = \left(\frac{\beta+2}{\alpha+2}\right)^2 |y|^{\beta} f(v(y)), \quad y \in \Omega_\kappa, \quad v = 0 \quad \rm{on}\ \partial \Omega_\kappa.
\end{equation}
In the particular case with $f(s)=|s|^{p-1}s$, setting $w(y) = \left(\frac{\beta+2}{\alpha+2}\right)^{\frac{2}{p-1}}v(y)$, we get
$$
-\Delta w(y) = |y|^{\beta} |w(y)|^{p-1}w(y), \quad y \in \Omega_\kappa, \quad v = 0 \quad \rm{on}\ \partial \Omega_\kappa.
$$
Therefore, we have proved the following result.

\begin{lemma}\label{characterization} 
$u_\alpha$ is a radial solution of \eqref{alpha} in $\Omega$ with $n$ nodal sets if, and only if, 
\begin{equation}\label{eq:related}
u_{\beta}(y) = \left(\frac{\beta+2}{\alpha+2}\right)^{\frac{2}{p-1}}u_\alpha\left(|y|^{\frac{\beta-\alpha}{\alpha+2}}y\right),\quad y \in \Omega_\kappa,\quad \kappa = \frac{\beta+2}{\alpha+2},
\end{equation}
is a radial solution of $(P_\beta)$ in $\Omega_\kappa$ with $n$ nodal sets.
\end{lemma}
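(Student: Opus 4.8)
The plan is to obtain the lemma directly from the change-of-variables computation carried out above, and then to close the equivalence by exploiting the fact that $T_\kappa$ is a homeomorphism of $\R^2$ whose inverse is again a map of the same type, namely $T_{1/\kappa}$ by \eqref{Tk-1}.

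First I would record the forward implication, which is essentially the calculation already displayed. Suppose $u_\alpha$ is a radial solution of \eqref{alpha}, and set $v(y) = u_\alpha(T_\kappa y)$ with $\kappa = (\beta+2)/(\alpha+2)$. Formula \eqref{radiallaplacian} for the radial Laplacian under $x = T_\kappa(y)$, together with the choice \eqref{kab} of $\kappa$, yields precisely \eqref{beta}. Since $f(s) = |s|^{p-1}s$ is positively homogeneous of degree $p$, the rescaling $w = (\frac{\beta+2}{\alpha+2})^{2/(p-1)} v$ absorbs the constant factor $(\frac{\beta+2}{\alpha+2})^2$ and produces $-\Delta w = |y|^{\beta}|w|^{p-1}w$, so $w = u_\beta$ solves $(P_\beta)$ on $\Omega_\kappa$. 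The boundary condition transfers because $\Omega_\kappa = T_\kappa^{-1}(\Omega)$ and $T_\kappa$ maps $\partial\Omega_\kappa$ onto $\partial\Omega$, while the radial symmetry of $u_\beta$ follows from $|T_\kappa(y)| = |y|^{\kappa}$, so that $T_\kappa$ sends spheres centered at the origin to spheres centered at the origin.

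Next I would verify that the number of nodal sets is preserved, which is the only step requiring any care. By \eqref{Tk} and \eqref{Tk-1}, $T_\kappa$ is a diffeomorphism of $\R^2\setminus\{0\}$ onto itself and, extended by $T_\kappa(0)=0$, a homeomorphism of $\R^2$ onto itself. Hence $y \mapsto T_\kappa(y)$ carries $\{u_\beta \neq 0\}$ onto $\{u_\alpha \neq 0\}$ continuously and bijectively in both directions, and so induces a bijection between the connected components of these two sets; the strictly positive prefactor $(\frac{\beta+2}{\alpha+2})^{2/(p-1)}$ never changes the sign of $u_\beta$ at any point. Therefore $n(u_\beta) = n(u_\alpha) = n$.

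Finally, the converse follows by symmetry. Because $T_\kappa^{-1} = T_{1/\kappa}$, the relation \eqref{eq:related} is invertible: starting from a radial solution $u_\beta$ of $(P_\beta)$ on $\Omega_\kappa$ and running the same construction with the roles of $\alpha$ and $\beta$ interchanged (equivalently $\kappa \leftrightarrow 1/\kappa$) recovers a radial solution $u_\alpha$ of \eqref{alpha} on $\Omega$ obeying \eqref{eq:related}, again with $n$ nodal sets. I expect the main, and essentially only, obstacle to be bookkeeping: confirming that inverting \eqref{eq:related} reproduces exactly the stated formula, and that the exponents and constants match correctly under $\kappa \leftrightarrow 1/\kappa$, which is a direct but slightly tedious algebraic check.
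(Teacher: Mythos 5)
Your proposal is correct and follows essentially the same route as the paper, which obtains the lemma directly from the change-of-variables computation $v=u_\alpha\circ T_\kappa$, the choice $\kappa=\frac{\beta+2}{\alpha+2}$ in \eqref{kab}, and the rescaling $w=\kappa^{2/(p-1)}v$; the converse and the preservation of nodal sets are handled implicitly there via $T_\kappa^{-1}=T_{1/\kappa}$, exactly as you spell out.
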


Given $\alpha \geq0$, $n \in \N$, we know that there exists a unique solution of \eqref{alpha} (up to multiplication by $-1$) with $n$ nodal sets; see \cite[Theorem 1.3 (i)]{ederson}. 
Let $u_\alpha$ and $u_\beta$ be radial solutions of \eqref{alpha} and $(P_\beta)$, respectively, with $n(u_\alpha)=n(u_\beta)$. Then $u_\alpha$ and $u_\beta$ are related by \eqref{eq:related} and $m(u_\alpha)$ is the maximal dimension of a subspace of $H^1_0(B)$ in which the quadratic form
\begin{equation*}
H^1_0(B) \ni w\mapsto Q_\alpha(w):=\int_{B}|\nabla w(x)|^2 dx - p\int_{B}|x|^\alpha |u_\alpha(x)|^{p-1}w^2(x) dx
\end{equation*}
is negative definite. Similarly we can compute $m(u_\beta)$. The crucial point for the proof of Theorem \ref{theo1} is the following result.
\begin{proposition}\label{prop1}
If $0 \leq \alpha \leq \beta$, then
$$
Q_{\beta}(w_\kappa)\leq \kappa \, Q_{\alpha}(w),\quad \forall \,  w \, \in H^1_0(B), \ \ Q_{\beta}(w_\kappa)= \kappa \, Q_{\alpha}(w),\quad \forall \,  w \, \in H^1_{0, rad}(B),
$$
where $w_\kappa(y) = w \circ T_\kappa(y)$, $T_\kappa$ is defined as in \eqref{Tk}  with $\kappa = \frac{\beta+2}{\alpha+2}$. 
\end{proposition}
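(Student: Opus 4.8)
The plan is to split the quadratic form $Q_\beta(w_\kappa)$ into its Dirichlet (gradient) part and its potential (nonlinear) part and to treat each separately. The key structural fact is that, since $0\le\alpha\le\beta$, the exponent $\kappa=\frac{\beta+2}{\alpha+2}$ determined in \eqref{kab} satisfies $\kappa\ge 1$, and that $T_\kappa$ maps the unit ball $B$ onto itself (because $|T_\kappa(y)|=|y|^\kappa$, so $|y|<1$ if and only if $|T_\kappa(y)|<1$), so that both $w$ and $w_\kappa=w\circ T_\kappa$ lie in $H^1_0(B)$. I will show that the gradient part produces the inequality, while the potential part produces an exact equality, and that these combine to give the claim.

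For the gradient term I would set $\psi:=w_\kappa$ and $\varphi:=w$. Since $w_\kappa=w\circ T_\kappa$, one has $\varphi=\psi\circ T_\kappa^{-1}$, so the pair $(\psi,\varphi)$ is exactly the pair appearing in Lemma \ref{lemma 1} (applied on $\Omega_\kappa=B$). The two-sided estimate there, together with $\kappa\ge1$ (so that $\max\{\kappa,\tfrac{1}{\kappa}\}=\kappa$), yields
$$
\int_B|\nabla w_\kappa(y)|^2\,dy\le \kappa\int_B|\nabla w(x)|^2\,dx,
$$
with equality whenever $w$ is radial, by the radial identity in Lemma \ref{lemma 1}.

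For the potential term I would perform the change of variables $x=T_\kappa(y)$. Using the Jacobian \eqref{JTk}, namely $dx=\kappa|y|^{2\kappa-2}\,dy$, together with $|T_\kappa(y)|^\alpha=|y|^{\kappa\alpha}$, the identity $w(T_\kappa(y))=w_\kappa(y)$, and the relation $u_\beta(y)=\kappa^{2/(p-1)}u_\alpha(T_\kappa(y))$ coming from Lemma \ref{characterization} and \eqref{eq:related} (so that $|u_\alpha(T_\kappa(y))|^{p-1}=\kappa^{-2}|u_\beta(y)|^{p-1}$), the potential integral transforms cleanly. The only thing to check is the bookkeeping of exponents: the powers of $\kappa$ collect as $\kappa\cdot\kappa^{-2}\cdot\kappa=1$, and the powers of $|y|$ collect as $\kappa\alpha+(2\kappa-2)=\kappa(\alpha+2)-2=\beta$ precisely because $\kappa(\alpha+2)=\beta+2$ by \eqref{kab}. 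Hence
$$
p\int_B|y|^\beta|u_\beta(y)|^{p-1}w_\kappa^2(y)\,dy=\kappa\, p\int_B|x|^\alpha|u_\alpha(x)|^{p-1}w^2(x)\,dx,
$$
an exact equality valid for every $w\in H^1_0(B)$.

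Combining the two parts gives $Q_\beta(w_\kappa)\le\kappa\, Q_\alpha(w)$ for all $w\in H^1_0(B)$, with equality for radial $w$. I expect the genuine content — and the only place an inequality (rather than equality) enters — to be the gradient estimate of Lemma \ref{lemma 1}, whose correct direction is dictated by the sign condition $\beta\ge\alpha$ (equivalently $\kappa\ge1$); the potential term, by contrast, is a bookkeeping computation that is an exact equality thanks to the specific choice $\kappa=\frac{\beta+2}{\alpha+2}$ that makes the exponent of $|y|$ land exactly on $\beta$.
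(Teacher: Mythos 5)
Your proposal is correct and follows essentially the same route as the paper: the gradient term is handled by Lemma \ref{lemma 1} with $\kappa\ge 1$ (inequality in general, equality for radial $w$), and the potential term is an exact equality obtained by the change of variables $x=T_\kappa(y)$ together with the scaling relation \eqref{eq:related}, the exponent of $|y|$ landing on $\beta$ precisely because of \eqref{kab}. The only difference from the paper's write-up is the direction in which you run the substitution, which is immaterial.
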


\begin{proof}[\bf{Proof}]
By \eqref{eq:related}, up to multiplication by $-1$,
$$
u_{\beta}(y) = \kappa^{\frac{2}{p-1}}u_\alpha (T_\kappa y).
$$
Hence
$$
\begin{aligned}
Q_{\beta}(w_\kappa) &= \int_{B}|\nabla w_\kappa(y)|^2dy - \kappa ^2 p\int_{B}|y|^{\beta} |u_\alpha(T_\kappa y)|^{p-1}w_\kappa^2(y)dy.
\end{aligned}
$$
Since $\kappa \geq 1$, it follows from Lemma \ref{lemma 1} that
$$
\int_{B}|\nabla w_\kappa(y)|^2dy \leq \max\left\{\kappa,\frac{1}{\kappa}\right\}\int_{B}|\nabla w(x)|^2dx = \kappa \int_{B}|\nabla w(x)|^2dx \ \ \forall \,  w \, \in H^1_0(B),
$$
$$
\int_{B}|\nabla w_\kappa(y)|^2dy = \kappa \int_{B}|\nabla w(x)|^2dx \quad \forall \,  w \, \in H^1_{0, rad}(B).
$$
Now, putting $x=T_\kappa(y)$, it follows from \eqref{Tk-1} and \eqref{JTk} that $dy = \kappa^{-1}|x|^{\frac{2-2\kappa}{\kappa}}dx$. Thus
$$
\kappa ^2 p \! \int_{B}|y|^{\beta} |u_\alpha(T_\kappa y)|^{p-1}w_\kappa^2(y)dy \!= \!\kappa p \!\int_{B}|x|^{\frac{\beta-2\kappa+2}{\kappa}} |u_\alpha(x)|^{p-1}w^2(x)dx \!=\!\kappa p \! \int_{B}|x|^\alpha |u_\alpha(x)|^{p-1}w^2(x)dx,
$$
since $\frac{\beta-2\kappa+2}{\kappa}=\alpha $. Therefore,
\[
Q_{\beta}(w_\kappa) \leq \kappa \int_{B}|\nabla w(x)|^2dx - \kappa p \int_{B}|x|^\alpha |u_\alpha(x)|^{p-1}w^2(x)dx =\kappa \, Q_{\alpha}(w) \ \ \forall \,  w \, \in H^1_0(B)   
\]
and
\[
Q_{\beta}(w_\kappa) = \kappa \, Q_{\alpha}(w) \ \ \forall \,  w \, \in H^1_{0,rad}(B). \qedhere
\]
\end{proof}

\begin{proof}[\bf{Proof of Theorem \ref{theo1}}]
Let $u_\alpha$ and $u_{\beta}$ be radial solutions of \eqref{alpha} and $(P_{\beta})$, respectively, with $n(u_\alpha)=n(u_\beta)=n$, $\alpha \leq \beta$. From Proposition \ref{prop1}, we have
$$
Q_{\beta}(w_\kappa)\leq \kappa \, Q_{\alpha}(w),\quad \forall \,  w \, \in H^1_0(B), \ \ Q_{\beta}(w_\kappa)= \kappa \, Q_{\alpha}(w),\quad \forall \,  w \, \in H^1_{0, rad}(B).
$$
Therefore, if $V$ is a subspace of $H^1_0(B)$ in which the quadratic form $Q_{\alpha}$ is negative definite, then $Q_{\beta}$ is also negative definite in the subspace $V_{\kappa}:=\{w\circ T_\kappa; w\in V\}$. Moreover, $Q_{\alpha}$ is negative definite in a subspace $V$ of $H^1_{0,rad}(B)$ if, and only if, $Q_{\beta}$ is negative definite in subspace $V_{\kappa}$. Since $V$ and $V_{\kappa}$ have the same dimension, we infer that $m(u_\alpha)\leq m(u_\beta)$ and $m_{rad}(u_\alpha)=m_{rad}(u_\beta)$. Finally, we know from \cite[Proposition 2.9]{Harrabi} that $m_{rad}(u_0)=n$ and we conclude the proof of Theorem \ref{theo1}.
\end{proof} 

To prove Theorem \ref{newtheo}, we start with the particular case with $\alpha$ even.

\begin{proposition}\label{prop2}
Let $\alpha > 0$ be even and let $u$ be a radial nodal solution of \eqref{f}. Then
$$
m_{rad}(u)\geq n(u)-1\quad \text{and}\quad
m(u) \geq m_{rad}(u) + (m(u_0)-m_{rad}(u_0))\left(\frac{\alpha + 2}{2}\right),
$$
where $u_0$ is a radial solution of \eqref{u_0} with $n(u_0)=n(u)$. If in addition \eqref{superlinear} holds, then
\begin{equation*}\label{estimate}
m_{rad}(u)\geq n(u)\quad \text{and hence}\quad
m(u) \geq n(u)+(m(u_0)-m_{rad}(u_0))\left(\frac{\alpha + 2}{2}\right).
\end{equation*}
\end{proposition}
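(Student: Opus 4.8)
The plan is to reduce Proposition~\ref{prop2} to the autonomous problem \eqref{u_0} via the change of variables $T_\kappa$ with the specific choice $\kappa = \frac{2}{\alpha+2}$, which is precisely the value that turns the weighted equation \eqref{f} into the autonomous equation \eqref{u_0}. Under the relation \eqref{eq:related} (or more directly \eqref{beta} with $\beta = 0$), a radial solution $u$ of \eqref{f} corresponds to a radial solution $u_0$ of \eqref{u_0} on the dilated domain $\Omega_{\frac{2}{\alpha+2}}$. Since $\kappa < 1$ here, the roles of $\kappa$ and $\frac{1}{\kappa}$ in Lemma~\ref{lemma 1} are swapped relative to the proof of Proposition~\ref{prop1}, but the radial identity $\kappa \int |\nabla\varphi|^2 = \int |\nabla\psi|^2$ still holds exactly, so the radial Morse indices of $u$ and $u_0$ coincide: $m_{rad}(u) = m_{rad}(u_0)$. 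The lower bounds $m_{rad}(u) \geq n(u)-1$ and, under \eqref{superlinear}, $m_{rad}(u) \geq n(u)$, then follow immediately by applying Theorem~A to $u_0$ (noting $n(u_0)=n(u)$).

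The substantive content is the inequality $m(u) \geq m_{rad}(u) + (m(u_0)-m_{rad}(u_0))\bigl(\frac{\alpha+2}{2}\bigr)$, and here I would work directly with the linearized eigenvalue problem and the spherical harmonic decomposition of Proposition~\ref{decomposition}. First I would write the Morse index of $u$ as the number of negative eigenvalues of $L_u = -\Delta - \alpha|x|^\alpha f'(u)$ and relate these, through the change of variables, to the eigenvalues of the singular problem \eqref{eigenfunction} associated with the linearization at $u_0$. The point is that a non-radial direction that is unstable for $u_0$, decomposed as $\psi_{rad}(r)Y_k(\theta)$ with angular eigenvalue $\lambda_k = k^2$ in dimension $N=2$, produces, after the transformation $T_\kappa$, an angular contribution scaled by $\frac{1}{\kappa^2} = \bigl(\frac{\alpha+2}{2}\bigr)^2$. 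Because $\alpha$ is even, $\frac{\alpha+2}{2}$ is a positive integer, and so each purely non-radial negative eigenvalue of $u_0$ with angular quantum number $k$ pulls back to $\frac{\alpha+2}{2}$ distinct admissible angular modes for $u$, each still giving a negative eigenvalue. This multiplicity factor $\frac{\alpha+2}{2}$ applied to the non-radial part $m(u_0)-m_{rad}(u_0)$ of the Morse index of $u_0$ is exactly the claimed gain.

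Concretely, I would count as follows. The quantity $m(u_0) - m_{rad}(u_0)$ is the number of negative eigenvalues of $L_{u_0}$ (with multiplicity) carried by genuinely non-radial spherical harmonics, i.e. those with $k \geq 1$. For each such negative eigenvalue, writing its decomposition \eqref{lambda} as $\lambda = \lambda_{rad} + k^2$ with $\lambda < 0$, the pullback under $T_\kappa$ produces admissible directions for $u$ indexed by the angular modes $j$ with $1 \le j$ and whose rescaled contribution $\kappa^2 \lambda_{rad} + j^2$ remains negative; since $\kappa = \frac{2}{\alpha+2}$ scales the radial eigenvalue by $\kappa^2$, the condition $j^2 < -\kappa^2\lambda_{rad}$ is met by exactly the modes $j = 1, \dots, \frac{\alpha+2}{2}\,k$, and restricting to a clean injective correspondence yields at least $\frac{\alpha+2}{2}$ new negative directions per original non-radial negative eigenvalue. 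Summing over the $m(u_0)-m_{rad}(u_0)$ non-radial negative eigenvalues and adding back the $m_{rad}(u) = m_{rad}(u_0)$ radial ones gives the bound. The main obstacle I anticipate is making this mode-counting genuinely rigorous: one must verify that the transformation $T_\kappa$ sends the singular eigenvalue problem for $u$ on $\Omega$ to that for $u_0$ on $\Omega_\kappa$ with the angular eigenvalues rescaled exactly by $\frac{1}{\kappa^2}$, ensure that the parameter $\frac{N-2}{2}$ threshold of Proposition~\ref{decomposition} is respected (trivially satisfied for $N=2$), and confirm that the constructed directions are linearly independent in $H^1_0(B)$, so that they contribute honestly to the Morse index. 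Once the spectral correspondence is set up precisely and $\frac{\alpha+2}{2}$ is recognized as an integer, the counting and the final estimates under \eqref{superlinear} follow by a direct application of Theorem~A to $u_0$.
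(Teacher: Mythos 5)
Your proposal follows essentially the same route as the paper: pass to the autonomous problem via $T_\kappa$ with $\kappa=\tfrac{2}{\alpha+2}$, use the spherical harmonic decomposition of Proposition \ref{decomposition} for the singular eigenvalue problem, exploit that $\tfrac{\alpha+2}{2}$ is an integer to multiply the count of admissible angular modes, and get the radial bounds from the autonomous theory. Two points need fixing, though. First, your scaling formula is written upside down: under the change of variables the radial eigenvalue $\lambda_{rad}$ of the singular problem for $u_0$ becomes $\lambda_{rad}/\kappa^2=\left(\tfrac{\alpha+2}{2}\right)^2\lambda_{rad}$ for $u$, not $\kappa^2\lambda_{rad}$. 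As literally stated, the condition $j^2<-\kappa^2\lambda_{rad}$ with $\kappa<1$ would \emph{shrink} the set of admissible modes (possibly to the empty set), contradicting the conclusion $j=1,\dots,\tfrac{\alpha+2}{2}k$ that you then draw; the correct condition $j^2<-\left(\tfrac{\alpha+2}{2}\right)^2\lambda_{rad}$ is what yields at least $\tfrac{\alpha+2}{2}$ times as many modes, since $k^2<-\lambda_{rad}$ implies $\left(\tfrac{\alpha+2}{2}k\right)^2<-\left(\tfrac{\alpha+2}{2}\right)^2\lambda_{rad}$. Second, the bridge you gloss over — that $m(u_0)-m_{rad}(u_0)$ equals the number of negative eigenvalues of the \emph{singular} problem \eqref{eig-no-weight} carried by nonradial eigenfunctions, so that the whole count can be done on the singular spectrum — is not automatic from the definition of the Morse index via $Q_u$ on $H^1_0$; the paper invokes \cite[Lemma 2.6]{gladiali} precisely for this, and without it your mode-counting does not yet produce a subspace of $H^1_0(B)$ on which $Q_\alpha$ is negative definite. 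With these two repairs your argument coincides with the paper's proof.
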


\begin{proof}[{\bf Proof.}]
Let $u$ be a radial nodal solution of \eqref{f},
with $\alpha = 2(m-1)$ even and $\kappa=\frac{1}{m}$. Then $\kappa = \frac{2}{\alpha+2}$ and, by \eqref{beta} ($\beta=0$ in this case), the function $u_0=u\circ T_{\kappa}$ is a radial nodal solution of the autonomous problem
\[
-\Delta u_0 = \frac{1}{m^2} f(u_0) \ \textrm{ in } \ \Omega_\kappa, \quad u_0 = 0  \ \textrm{ on }\ \partial \Omega_\kappa.
\]
Therefore, by \cite[Lemma 2.6]{gladiali}, the singular eigenvalue problem
\begin{equation}\label{eig-no-weight}
-\Delta \psi - \frac{1}{m^2} f'(u_0)\psi = \lambda \frac{\psi}{|y|^2} \ \textrm{ in } \ \Omega_\kappa \backslash\{0\}, \quad \psi = 0 \ \textrm{ on } \ \partial \Omega_\kappa,
\end{equation}
has $m(u_0)-m_{rad}(u_0)$ negative eigenvalues associated to nonradial eigenfunctions, counted with their multiplicity. We count the negative radial eigenvalues of \eqref{eig-no-weight} as $\lambda_{1}=\lambda_{1}^{rad} < \lambda_{2}^{rad}<\ldots<\lambda_{m_{rad}(u_0)}^{rad}$ whose corresponding eigenfunctions we denote by $\psi_n^{rad}$. By Proposition \ref{decomposition}, every negative nonradial eigenvalue of \eqref{eig-no-weight} has the decomposition $\lambda_n^{rad}+k^2$, for some $n=1,\ldots,m_{rad}(u_0)$ and $k\in\N$. For each $n=1,\ldots,m_{rad}(u_0)$, consider $\mathcal{N}_n:=\{k\in\N; \lambda_n^{rad}+k^2<0\}$. Thus
\begin{equation}\label{eq:DES1}
m(u_0)-m_{rad}(u_0) = 2\sum_{n=1}^{m_{rad}(u_0)}\mathcal \# \mathcal{N}_n.
\end{equation}
Moreover, using eq. \eqref{radiallaplacian} with $\kappa=\frac{2}{\alpha+2}$, we have that the functions $\varphi_n^{rad}=\psi_n^{rad}\circ T_{\kappa}^{-1}$, $n=1,\ldots,m_{rad}(u_0)$, are the radial eigenfunctions of

\begin{equation}\label{eigenvalue}
-\Delta \varphi - |x|^\alpha f'(u)\varphi = \lambda \frac{\varphi}{|x|^2} \ \textrm{ in } \ \Omega\backslash\{0\}, \quad \varphi = 0 \ \textrm{ on } \ \partial \Omega,
\end{equation}
with $\lambda = m^2\lambda_n^{rad}<0$. Then $m_{rad}(u)=m_{rad}(u_0)$ and with $\mathcal{N}_n^{\alpha}:=\{k\in\N; m^2\lambda_n^{rad}+k^2<0\}$, 
\begin{equation} \label{eq:DES2}
m(u)-m_{rad}(u) = 2\sum_{n=1}^{m_{rad}(u_0)}\mathcal \# \mathcal{N}_n^{\alpha}\, .
\end{equation}

We claim that
\begin{equation}\label{eq:DES3}
\# \mathcal{N}_n^{\alpha} \geq m (\# \mathcal{N}_n) \quad \forall\, n= 1, \ldots, m_{rad}(u_0).
\end{equation}
Indeed, if $k \in \mathcal{N}_n$, then $\lambda_n^{rad} + k^2 < 0$, whence $m^ 2\lambda_n^{rad} + (mk)^2 < 0$. The latter shows that $mk \in \mathcal{N}_n^{\alpha}$ and this proves \eqref{eq:DES3}.

Therefore, from \eqref{eq:DES1}, \eqref{eq:DES2}, \eqref{eq:DES3}, we infer that
\[
m(u) - m_{rad}(u) \geq (m(u_0)-m_{rad}(u_0)) m = (m(u_0)-m_{rad}(u_0))\left(\frac{\alpha + 2}{2}\right).
\]

Now, with respect to the radially symmetric eigenfunctions, it is proved \cite[Theorem 2.1]{pacella} that \eqref{eig-no-weight} has at least $n(u)-1$ negative eigenvalues associated to radial eigenfunctions, and this number becomes $n(u)$ if \eqref{superlinear} holds. Again using eq. \eqref{radiallaplacian} with $\kappa=\frac{2}{\alpha+2}$, we have that $\lambda\mapsto m^2 \lambda$ is a bijection between radial eigenvalues of \eqref{eig-no-weight} and we obtain the lower bounds for $m_{rad}(u)$.
\end{proof}

Next we use Proposition \ref{prop2} to prove Theorem \ref{newtheo}. 

\begin{proof}[\bf{Proof of Theorem \ref{newtheo}}]
Let $\alpha>0$ and $u$ be a radial nodal solution of \eqref{f}. Then, by \eqref{beta}, for all $\gamma\geq 0$, the function $v=u\circ T_\kappa$ is a radial nodal solution of
\[
-\Delta v = \kappa^2 |y|^{\gamma} f(v) \ \textrm{ in } \ \Omega_\kappa, \quad v = 0 \   \textrm{ on } \ \partial \Omega_\kappa, \ \ \textrm{ with } \ \kappa = \frac{\gamma+2}{\alpha+2}.
\]
Hence, if $\gamma \leq \alpha$, i.e. $\kappa \leq 1$, setting $w_\kappa = w\circ T_\kappa$, it follows from Lemma \ref{lemma 1} and \eqref{JTk} that
$$
\int_{\Omega}|\nabla w(x)|^2 dx - \int_{\Omega}|x|^{\alpha}f'(u(x))w^2(x) dx \leq \frac{1}{\kappa} \left[\int_{\Omega_\kappa}|\nabla w_\kappa(y)|^2 dy - \kappa^2 \int_{\Omega_\kappa}|y|^{\gamma}f'(v(y))w^2(y) dy \right],
$$
for all $w \in H^1_0(\Omega)$ and the equality holds for all $w \in H^1_{0, rad}(\Omega)$. Consequently, $m_{rad}(u) = m_{rad}(v)$ and $m(u) \geq m(v)$. In particular, taking $\gamma = 2[\frac{\alpha}{2}]$ we can use Proposition \ref{prop2} for $v$ to obtain
\[
m_{rad}(v) \geq n(v)-1=n(u)-1,\ \ m_{rad}(v) \geq n(u)\ \text{if \eqref{superlinear} holds} \ \ \text{and}
\]
\[
m(v) \geq m_{rad}(v) + (m(u_0)-m_{rad}(u_0))\left(\frac{\gamma+2}{2}\right) = m_{rad}(u) + (m(u_0)-m_{rad}(u_0))\left(\left[\frac{\alpha}{2}\right] + 1\right),
\]
where $u_0$ is radial solution of \eqref{u_0} with $n(u_0)=n(u)$.
\end{proof}

\begin{remark}
Observe that the key argument in the proof of Theorem \ref{newtheo} is the monotonicity of the Morse indices $m(u) \geq m(v)$ proved above, thanks to $\gamma \leq \alpha$.
\end{remark}


\end{document}